\def\N{\mathbb{N}}
\def\R{\mathbb{R}}
\DeclareMathOperator{\var}{Var}
\DeclareMathOperator{\RV}{RV}
\newtheorem{Thm}{Theorem}
\newtheorem{Pro}[Thm]{Proposition}
\newtheorem{Lem}[Thm]{Lemma}
\begin{document}

\title{The variation of invariant graphs in forced systems}
\author{Bastien Fernandez$^1$ and Anthony Quas$^2$}
\date{}   
\maketitle
\begin{center}
$^1$ Laboratoire de Probabilit\'es, Statistique et Mod\'elisation\\
CNRS - Universit\'e Paris 7 Denis Diderot - Sorbonne Univ.\\
75205 Paris CEDEX 13 France\\

$^2$ Department of Mathematics and Statistics\\
University of Victoria\\
Victoria, BC, V8W 3R4, Canada
\end{center}

\begin{abstract}
In skew-product systems with contractive factors, all orbits asymptotically approach the graph of the so-called sync function; hence, the corresponding regularity properties primarily matter. In the literature, sync function Lipschitz continuity and differentiability have been proved to hold depending on the derivative of the base reciprocal, if not on its Lyapunov exponent. However, forcing topological features can also impact the sync function regularity. Here, we estimate the total variation of sync functions generated by one-dimensional Markov maps. A sharp condition for bounded variation is obtained depending on parameters, that involves the Markov map topological entropy. The results are illustrated with examples. 
\end{abstract}

\leftline{\small\today.}
\bigskip

{\bf To describe the properties of the long term response to a deterministic stimulus is a ubiquitous issue in Chaotic Dynamics. In the context of dissipative factors driven by autonomous systems, this question boils down to evaluating the characteristics of the so-called synchrony function. So far, focus has been made on sync function features that depend on the forcing derivative (in a broad sense). However, discontinuous examples in applications suggest the need to evaluate other basic features, such as the total variation. Here, we prove in simple examples that the sync function is, or is not, of bounded variation, depending only on the forcing topological entropy. Therefore, basic properties of invariant graphs also depend on topological features of the corresponding forcing systems.}
\bigskip

\section{Introduction}
An important issue in the theory of forced systems is to evaluate the regularity of their synchronization graph. Suppose that an autonomous (discrete time) dynamical system
\[
\text{\tt x}^{t+1}=f(\text{\tt x}^t),\ t\in\N\cup\{0\}
\]
is given together with a dissipative factor
\[
\text{\tt y}^{t+1}=g(\text{\tt x}^t,\text{\tt y}^t).
\]
Here $f$ is an invertible map with invariant subset $U$ in a Banach space $X$ with norm $\|\cdot\|_X$ and $g:X\times Y\to Y$ ($Y$ is a Banach space with norm $\|\cdot\|_Y$) is such that
\[
\sup_{\text{\tt x}\in U}\|g(\text{\tt x},\text{\tt y})-g(\text{\tt x},\text{\tt y}')\|_Y\leq \gamma \|\text{\tt y}-\text{\tt y}'\|_Y,\ \forall \text{\tt y},\text{\tt y}'\in Y
\]
for some $0<\gamma<1$. Then the orbits $\{(\text{\tt x}^t,\text{\tt y}^t)\}$ of the skew-product system $(f,g)$ are attracted by the graph of the corresponding sync function \cite{AVR86,KP96,PC90,RSTA96}. This sync function, say $\phi:U\to Y$, can be defined by the conjugacy equation
\[
g(\text{\tt x},\phi(\text{\tt x}))=\phi\circ f(\text{\tt x}),\ \forall \text{\tt x}\in U,
\]
which ensures invariance of the corresponding graph $\{(\text{\tt x},\phi(\text{\tt x}))\}_{\text{\tt x}\in U}$.
In this context, regularity properties of this function matter because they determine those dynamical characteristics of the drive system that carry over to the factor. For instance, Lipschitz continuity implies control of dimension estimates. Applications range from filtering of chaotic signals \cite{BBDRCPR88,BHM92,C10} to damage detection in material science \cite{NTSM07}.

The study of sync functions can be regarded as part of the analysis of inertial manifolds in dynamical systems \cite{HP70,HPS77}. Beside existence and continuity, a standard result in this theory is the proof of differentiability under the condition 
\[
K\gamma<1
\]
where $K=\sup\limits_{\text{\tt x}\in U}\|Df^{-1}(\text{\tt x})\|_X$, $f$ is a diffeomorphism and $g$ is continuously differentiable \cite{CD96,HOY97,S97,S99}. (If $f^{-1}$ and $g$ are merely Lipschitz continuous then the same inequality - where $K$ now stands for the Lipschitz constant of $f^{-1}$ - implies that the function $\phi$ itself is Lipschitz continuous.) When $K\gamma\geq 1$, $\phi$ may not be differentiable/Lipschitz continuous \cite{BP87,KM-PY84}; however it is certainly H\"older continuous \cite{ACC01,CD96,S97,S99,U04}.

The analysis has subsequently been extended to accommodate non-uniformly hyperbolic effects. In particular, $\phi$ has been proved to be differentiable in the Whitney sense ({\sl i.e.}\ given an invariant measure $\mu$ for $f$, $\phi$ is uniformly differentiable on sets of measure $1-\epsilon$ for every $\epsilon>0$) provided that \cite{CD96,S99}
\[
e^{\text{Lyap}(f^{-1})}\gamma<1
\]
where $\text{Lyap}(f^{-1})$ is the maximum of the largest Lyapunov exponent of $f^{-1}$ over a set of full measure $\mu$, and satisfies $e^{\text{Lyap}(f^{-1})}\leq K$ .  

Investigations have been pursued beyond the homeomorphic case, either when $f$ is non-invertible \cite{BJMSS03,RA03,U04}, or when it has discontinuities, more precisely, when $f^{-1}$ is a discontinuous map of the interval \cite{ACC01,BP87,BHM92}. In this case, discontinuities transfer to the sync function, where they typically form dense subsets. A natural quantifier in this setting is the total variation which, roughly speaking, measures the length of the corresponding graph, see e.g.\ \cite{KF99} for a definition. The sync function has been shown to be of bounded variation under the condition
\[
e^{h_\text{top}(f^{-1})}\gamma<1
\]
where $h_\text{top}(f^{-1})$ is the topological entropy of $f^{-1}$. On the other hand, the total variation of $\phi$ becomes infinite when $\gamma$ is sufficiently large \cite{F12}. A sync function of bounded variation makes it easier to show that the factor statistics inherits absolute continuity of the forcing statistics \cite{FQ11}. In any case, topological feature of $f^{-1}$ can also affect regularity properties of $\phi$.

\section{Main results}
This letter aims to provide similar total variation estimates in more general cases, when the forcing is not necessarily discontinuous. For simplicity, we shall work in the framework of (one-dimensional) linear filters and skew-product inverse forcing (although certain results extend to more general cases without additional conceptual difficulties) \cite{BBDRCPR88,BHM92,DC96,U04}. More precisely, the set $U$ can be written as $U=[0,1]\times U_2$ (where $U_2$ will be irrelevant), and letting $\text{\tt x}=(x,\text{\tt x}_2)$ where $x\in [0,1]$, the inverse forcing is given by
\[
f^{-1}(x,\text{\tt x}_2)=(T(x),f_2(x,\text{\tt x}_2))
\]
where $T:[0,1]\to [0,1]$ (and, again, the mapping $f_2$ will play no role here). For a systematic way of defining $f$ such that $f^{-1}$ is as here, see \cite{F12}. Assuming in addition that the filter satisfies
\[
g(\text{\tt x},\text{\tt y})=x+\gamma \text{\tt y},\ \forall \text{\tt y}\in\R=Y, \text{\tt x}=(x,\text{\tt x}_2)\in[0,1]\times U_2,
\]  
the sync function then takes the following compact expression
\begin{equation}
\phi_{\gamma}=\sum\limits_{k=0}^{+\infty}\gamma^kT^{k+1}.
\label{DEFPHIT}
\end{equation}
A Markov map of the interval \cite{PY98} is said to be transitive if the corresponding transition matrix is irreducible. The main result of this paper states that, for $T$ a piecewise affine expanding and transitive Markov map, 
the total variation of $\phi_\gamma$ shows a sharp transition at $\gamma=e^{-h_\text{top}(T)}$. 
\begin{Thm}
Assume that $T$ is a piecewise affine and expanding, and transitive Markov map. Then, the total variation of $\phi_\gamma$ defined by \eqref{DEFPHIT} on $[0,1]$
\begin{itemize}
\item[$\bullet$] is finite when $e^{h_\text{top}(T)}\gamma<1$, and
\item[$\bullet$] is infinite when $e^{h_\text{top}(T)}\gamma>1$, except maybe for at most $2N-1$ values of $\gamma$, where $N$ is the number of atoms of $T$.
\end{itemize}
\label{MAINRES}
\end{Thm}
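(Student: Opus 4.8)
The plan is to treat the two halves of the dichotomy separately, the convergent half being a routine geometric‑series estimate and the divergent half carrying all of the difficulty. When $e^{h_\text{top}(T)}\gamma<1$ I would use that the total variation is subadditive, $\var(\phi_\gamma)\le\sum_{k\ge0}\gamma^k\var(T^{k+1})$, and that for a piecewise affine Markov map $\var(T^{k+1})$ is at most the number of laps (maximal monotonicity intervals) of $T^{k+1}$, hence at most the number of admissible words of length $k+1$ in the subshift of finite type coded by the Markov partition. If $A$ denotes the $N\times N$ transition matrix of that partition, with $A_{ij}=1$ iff $T(I_i)\supseteq I_j$, then this count equals $\mathbf 1^{\top}A^{k}\mathbf 1$; since $e^{h_\text{top}(T)}$ is the spectral radius of $A$, one has $\mathbf 1^{\top}A^{k}\mathbf 1\le C_\varepsilon\,(e^{h_\text{top}(T)}+\varepsilon)^{k}$ for every $\varepsilon>0$, and choosing $\varepsilon$ small enough that $(e^{h_\text{top}(T)}+\varepsilon)\gamma<1$ makes the series converge. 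Transitivity is not needed for this half.

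\emph{The divergent half: setup.} When $e^{h_\text{top}(T)}\gamma>1$ I would argue by contradiction, assuming $\var(\phi_\gamma)=V<\infty$ and writing $\lambda:=e^{h_\text{top}(T)}$, now the Perron eigenvalue of the irreducible matrix $A$. Iterating the conjugacy equation $\phi_\gamma=T+\gamma\,\phi_\gamma\circ T$ gives the exact self‑similarity $\phi_\gamma=P_n+\gamma^{\,n}\,\phi_\gamma\circ T^n$, with $P_n:=\sum_{k=0}^{n-1}\gamma^kT^{k+1}$ affine on every level‑$n$ Markov cylinder. Bounding $\var(\phi_\gamma)$ from below by the sum of one‑sided increments over the partition into level‑$n$ cylinders $I_w$ (indexed by admissible words $w=w_0\cdots w_n$) and substituting this decomposition, the increment of $\phi_\gamma$ across $I_w$ comes out as exactly $\epsilon_w\gamma^{\,n}\bigl(D_{w_n}+\Sigma_w\bigr)$, where $\epsilon_w\in\{\pm1\}$ is the sign of $(T^n)'$ on $I_w$, $D_j$ is the $n$‑independent one‑sided increment of $\phi_\gamma$ across the atom $I_j$ (finite because $V<\infty$), and the chain rule produces $\Sigma_w=|I_{w_n}|\sum_{j=1}^{n}\bigl(\gamma^{\,j}\,\sigma_{w_{n-1}}\sigma_{w_{n-2}}\cdots\sigma_{w_{n-j+1}}\bigr)^{-1}$ (empty product $=1$), $\sigma_i$ denoting the slope of $T$ on $I_i$. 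Consequently $\var(\phi_\gamma)\ge\gamma^{\,n}\sum_w\bigl|D_{w_n}+\Sigma_w\bigr|$ for every $n$.

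\emph{The divergent half: forcing the bound to blow up.} Because $|\sigma_i|>1$, each $\Sigma_w$ is a bounded number depending only on $w_n$ and on the backward itinerary $w_{n-1},w_{n-2},\dots$ (if some $|\sigma_i|<1/\gamma$, the $\Sigma_w$ of cylinders dwelling a definite fraction of their time in such atoms are instead of large order, which only helps). The crux is that, as $w$ ranges over the $\asymp\lambda^{\,n}$ admissible words of length $n$ — transitivity guaranteeing that $\asymp\lambda^n$ of them end at any prescribed atom and carry any prescribed finite recent backward itinerary — the numbers $\Sigma_w$ cannot be asymptotically concentrated at the finite set $\{-D_1,\dots,-D_N\}$: two words ending at the same atom with distinct recent backward itineraries force the corresponding $\Sigma_w$ to differ by a definite amount, so a positive proportion of the $\asymp\lambda^n$ cylinders each contribute at least a fixed positive multiple of $\gamma^n$, giving $\var(\phi_\gamma)\gtrsim(\lambda\gamma)^{\,n}\to\infty$ and contradicting $V<\infty$. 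This non‑concentration is automatic unless every product $\sigma_{w_{n-1}}\sigma_{w_{n-2}}\cdots\sigma_{w_{n-j+1}}$ is itinerary‑independent, i.e.\ unless all branches of $T$ share a common signed slope $\sigma$ (which must then equal $\lambda$); in that degenerate sub‑case $\Sigma_w$ reduces to a function $\Sigma_j(\gamma)$ of the terminal atom alone, the only surviving obstruction is the system $D_j(\gamma)+\Sigma_j(\gamma)=0$ for all $j$, and — since forward invariance of the Markov‑partition boundary makes the one‑sided boundary values of $\phi_\gamma$, hence the $D_j$, the solutions of a linear system whose entries are affine in $\gamma$ — this reduces to an explicit polynomial equation in $\gamma$ whose number of roots, essentially one resonance per atom and per orientation, is at most $2N-1$.

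\emph{Main obstacle.} The load‑bearing step is precisely the non‑concentration estimate for $\{\Sigma_w\}$: it is there that one must exclude an accidental cancellation between the (essentially deterministic) affine pieces $P_n$ and the transverse oscillation encoded by the increments $D_j$, and it is there that the exceptional values of $\gamma$ are born and must be counted. The orientation bookkeeping forced by orientation‑reversing branches, and the reduction of the imprimitive (periodic) case of $A$ to the primitive one by replacing $T$ with a power, are comparatively minor technicalities.
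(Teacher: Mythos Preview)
Your treatment of the finite–variation half is essentially the paper's argument: subadditivity of variation plus the lap–count growth rate $\log n_k/k\to h_{\text{top}}(T)$.

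For the infinite–variation half you take a genuinely different route. The paper does \emph{not} compute cylinder increments. Instead it introduces the \emph{reduced variation} $\RV_I(f)=\inf_a\var_I(f(x)-ax)$, sets $v_i=\RV_{I_i}(\phi_\gamma)$, and observes from $\phi_\gamma=T+\gamma\phi_\gamma\circ T$ that $v\ge\gamma M v$ componentwise; Perron--Frobenius for the irreducible matrix $M$ with spectral radius $e^{h_{\text{top}}(T)}>\gamma^{-1}$ then forces $v=0$, i.e.\ $\phi_\gamma$ is \emph{affine on every atom}. The exceptional $\gamma$ are then counted by locating one concrete point $z$ in the interior of some atom at which either the one–sided derivatives of $\phi_\gamma$ or its one–sided values must match; because atom boundaries are eventually periodic with pre-period plus period $\le N+1$ (resp.\ $\le 2N$ for two one-sided images of a discontinuity), this match is a nondegenerate polynomial equation of degree $\le N$ (resp.\ $\le 2N-1$).

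Your direct approach is plausible in spirit but has two genuine gaps. First, the ``non-concentration'' step is asserted, not proved: you claim that if two recent backward itineraries differ then the corresponding $\Sigma_w$ differ by a definite amount, but the $j=2$ difference $|I_{w_n}|\gamma^{-2}(\sigma_i^{-1}-\sigma_{i'}^{-1})$ has to compete with the tail $\sum_{j\ge3}$, which you do not control uniformly (and which, when some $|\sigma_i|<\gamma^{-1}$, need not be a small perturbation). You would still need an argument that the map from backward itineraries to $\Sigma_w^\infty$ has image not reduced to $\{-D_{w_n}\}$, and then a pigeonhole on suffix cylinders; none of this is in the proposal. Second, even granting the reduction to the constant–slope case, your count ``at most $2N-1$'' is unmotivated: the system $D_j(\gamma)+\Sigma_j(\gamma)=0$ is $N$ equations, and you give no reason why it collapses to a single polynomial of degree $\le 2N-1$ (the paper's $2N-1$ comes from the eventual periodicity of \emph{both} one–sided orbits at a single discontinuity, not from a system over all atoms). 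Finally, your case split ``all slopes equal vs.\ not'' is not the right dichotomy: the paper's split is ``some iterate has non-constant derivative inside an atom'' vs.\ ``some iterate has a discontinuity inside an atom'', and one of these always holds because $T$ is expanding; constant slopes with a discontinuity fall in the second case, which your analysis does not isolate. The reduced–variation device in the paper buys exactly the clean passage from ``finite variation'' to ``piecewise affine'' that your cylinder computation is struggling to replicate.
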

\begin{figure}[ht]
\begin{center}
\includegraphics*[width=150mm]{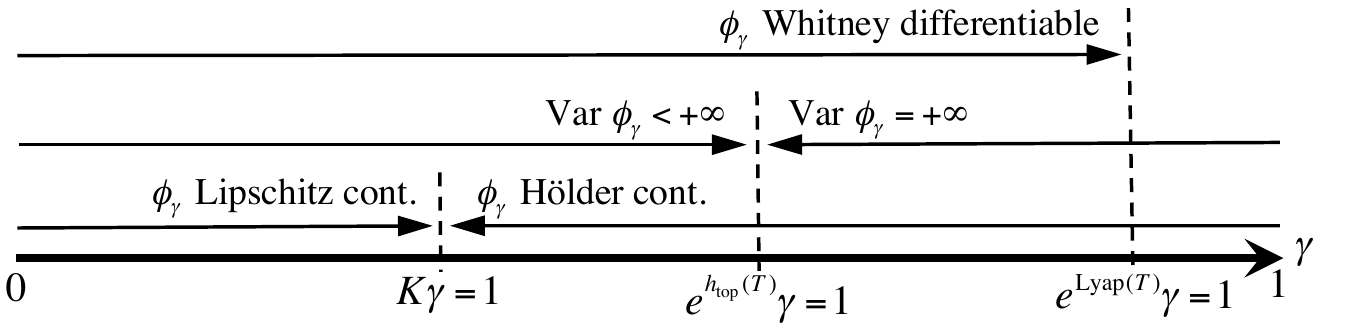}
\end{center}
\caption{Schematic diagram of the properties of $\phi_{\gamma}={\displaystyle\sum\limits_{k=0}^{+\infty}}\gamma^kT^{k+1}$ for $0<\gamma<1$, when $T$ is a continuous piecewise affine and expanding, and transitive Markov map of the interval.}
\label{SCHEMDIAG}
\end{figure}
For maps $T$ that are also continuous, this result can be combined with the previously mentioned ones to obtain the schematic diagram of the sync function properties presented in Fig.\ \ref{SCHEMDIAG}.

As the proof will show, the first part of the statement actually holds for every piecewise continuous and monotone map $T$ of the interval, and also for more general filters such as 
\[
g(\text{\tt x},\text{\tt y})=p(x)+\gamma \text{\tt y}
\]
where $p$ is assumed to be smooth. An example of such extension is $p(x)=\cos(2\pi x)$ and $T(x)=\beta x-\lfloor \beta x\rfloor$, where the sync function essentially boils down to the Weierstrass function \cite{KM-PY84} (NB: In this case we have $K=e^{h_\text{top}(T)}=e^{\text{Lyap}(T)}=\beta$). 

On the other hand, the proof of the second part strongly relies on the assumptions on $T$, although we believe that infinite variation occurs for sync functions associated with more general maps. Furthermore, we also believe that $\phi_\gamma$ has infinite variation for all $\gamma\geq e^{-h_\text{top}(T)}$, i.e.\ there are no exceptional values. This can be proved in some special cases (see examples in Fig.\ \ref{FIGALLLAMB}).
\begin{figure}[ht]
\begin{center}
\includegraphics*[height=48mm]{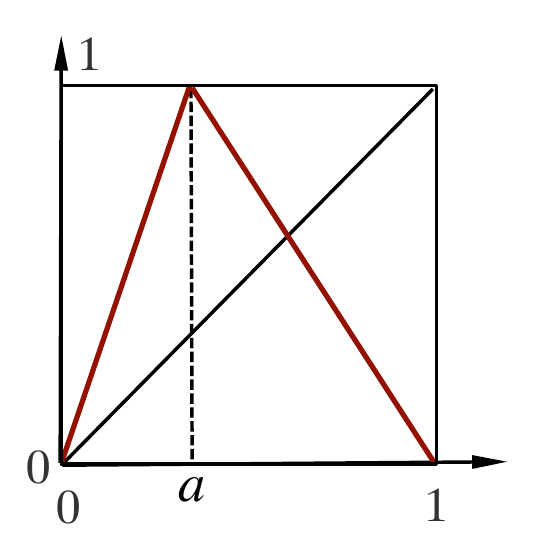}
\hspace{0.75cm}
\includegraphics*[height=48mm]{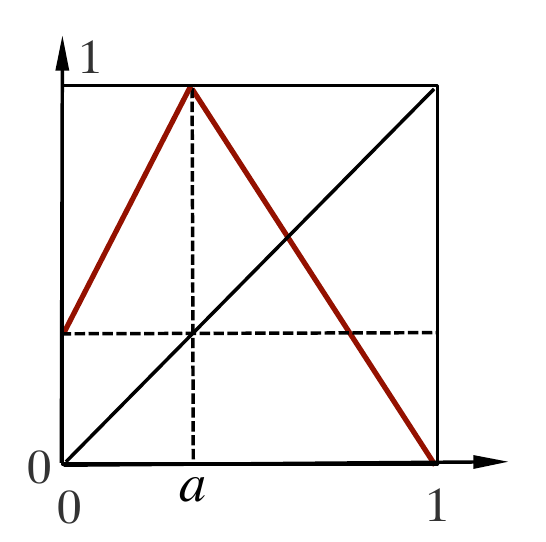}
\hspace{0.75cm}
\includegraphics*[height=48mm]{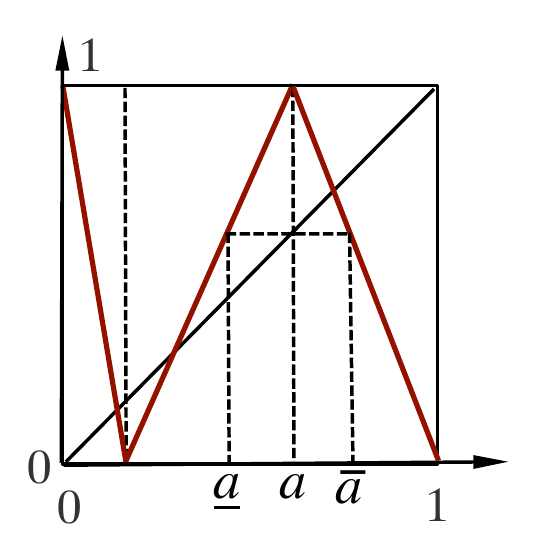}
\end{center}
\caption{Examples of maps satisfying the assumptions of Proposition \ref{ALLLAMB}.}
\label{FIGALLLAMB}
\end{figure}
\begin{Pro}
Assume that $T$ is a piecewise affine and expanding, and transitive Markov map with a full branch and such that
\begin{itemize}
\item either $T(0+0)=0$ or $T(1-0)=1$,
\item or there exist $\underline{a}<a<\overline{a}$ such that $T(\underline{a})=T(\overline{a})=a$ and either $T(a)=0$ and $T(0)>0$ or $T(a)=1$ and $T(1)<1$.
\end{itemize}
Then, the total variation of $\phi_\gamma$ on $[0,1]$ is infinite when $e^{h_\text{top}(T)}\gamma\geq 1$.
\label{ALLLAMB}
\end{Pro}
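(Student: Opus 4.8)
The plan is to separate the strict regime $e^{h_{\text{top}}(T)}\gamma>1$ from the critical value $\gamma=e^{-h_{\text{top}}(T)}$, and to exploit throughout the renormalisation identity $\phi_\gamma=P_n+\gamma^{n}\,\phi_\gamma\circ T^{n}$ with $P_n:=\sum_{k=0}^{n-1}\gamma^{k}T^{k+1}$; note that $\|\phi_\gamma-P_n\|_\infty\le\gamma^{n}/(1-\gamma)$ since $0\le T^{k+1}\le 1$, and that $P_n$ is affine on each level-$n$ cylinder. Write $\lambda:=e^{h_{\text{top}}(T)}$, the spectral radius of the transition matrix. The engine is a copy lemma: if $b$ is a branch of $T^{n}$ with $T^{n}(b)=[0,1]$ (``full branch of $T^n$'', and these exist since $T$ has a full branch), then $T^{n}|_b$ is an affine bijection $b\to[0,1]$, so on $b$ one has $\phi_\gamma=(\text{affine})+\gamma^{n}\,\phi_\gamma\circ T^{n}|_b$, whence $\operatorname{osc}_b\phi_\gamma=\operatorname{osc}_{[0,1]}\big(\mu u+\gamma^{n}\phi_\gamma(u)\big)=\gamma^{n}\operatorname{osc}_{[0,1]}\big((\mu/\gamma^n)u+\phi_\gamma(u)\big)\ge\gamma^{n}\delta_0$, where $\delta_0:=\inf_{\nu\in\R}\operatorname{osc}_{[0,1]}(\nu u+\phi_\gamma(u))$. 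One checks $\delta_0>0$: $\phi_\gamma$ is not affine (otherwise the conjugacy equation $\phi_\gamma=T+\gamma\,\phi_\gamma\circ T$ would make $T$ affine, impossible for a transitive expanding Markov map with at least two atoms), so $\nu u+\phi_\gamma(u)$ is non-constant for every $\nu$, and since $\nu\mapsto\operatorname{osc}_{[0,1]}(\nu u+\phi_\gamma(u))$ is $1$-Lipschitz and tends to $+\infty$, it attains a positive minimum.

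For $e^{h_{\text{top}}(T)}\gamma>1$ this already finishes the proof. The full branches of $T^{n}$ are the cylinders whose last symbol indexes a full atom; since $T$ has a full atom and its transition matrix is irreducible, Perron--Frobenius yields at least $c\,\lambda^{n-1}$ of them for all $n$ in a suitable residue class (their exponential growth rate is $\log\lambda$). Being pairwise disjoint, the copy lemma and additivity of the variation over disjoint intervals give $\var\phi_\gamma\ge(\#\text{full branches of }T^{n})\cdot\gamma^{n}\delta_0\ge(c\delta_0/\lambda)(\gamma\lambda)^{n}\to\infty$, hence $\var\phi_\gamma=\infty$. Note that this case uses only transitivity and the existence of a full branch; in particular it shows that, once a full branch is present, there are no exceptional values in the second bullet of Theorem~\ref{MAINRES}.

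At the critical value $\gamma=1/\lambda$ the same estimate only yields $\var\phi_\gamma\ge c\delta_0/\lambda$, so a sharper argument is needed and the two structural alternatives of the hypothesis become essential. The reduction is to show $\var P_n\to\infty$: since $P_n$ is piecewise affine with breakpoints among the $O(\lambda^{n})$ endpoints of level-$n$ cylinders, evaluating $\phi_\gamma$ at interior points of these affine pieces and using $\|\phi_\gamma-P_n\|_\infty\le\gamma^n/(1-\gamma)$ gives $\var\phi_\gamma\ge\var P_n-2C\lambda^{n}\gamma^{n}/(1-\gamma)=\var P_n-O(1)$ because $\lambda^{n}\gamma^{n}=1$. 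On a level-$n$ cylinder $[i_0\cdots i_{n-1}]$ the slope of $P_n$ is $\sum_{k=0}^{n-1}\gamma^{k}(T^{k+1})'$, which for the piecewise affine map is controlled by the products of the branch slopes and, in sign, by the partial products $\varepsilon_k=\prod_{j\le k}\operatorname{sign}(T'\text{ on atom }i_j)$ of the visited branch orientations; the whole difficulty is that these signed sums can cancel en masse (as they do, say, for a map all of whose branches are decreasing). The hypothesis ``$T(0+0)=0$ or $T(1-0)=1$'' forces an increasing, self-looping branch at a boundary atom, and the zigzag alternative forces, next to the folding point $a$, an adjacent branch of full range with prescribed orientation; in either case one produces, for each $n$, a sufficiently rich family of cylinders along which $\sum_{k<n}\varepsilon_k$ is sign-controlled, of size of order $n$ in the aligned regime or order $\sqrt n$ in the random-walk regime, so that the corresponding contributions to $\var P_n$ do not cancel and $\var P_n\to\infty$. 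Combined with the reduction this gives $\var\phi_\gamma=\infty$. The main obstacle is precisely this last point: a quantitative no-cancellation estimate for the signed slope sums, valid for the maps singled out by the two alternatives of the hypothesis, which is where the full-branch and boundary/zigzag assumptions — and the examples of Fig.~\ref{FIGALLLAMB} — are genuinely used.
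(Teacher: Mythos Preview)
Your treatment of the strict regime $e^{h_{\text{top}}(T)}\gamma>1$ is correct and conceptually parallel to the paper's, though more abstractly packaged. You work with the affine-invariant quantity $\delta_0=\inf_\nu\operatorname{osc}_{[0,1]}(\nu u+\phi_\gamma(u))>0$ (positive because $\phi_\gamma$ is not globally affine), whereas the paper manufactures the explicit positive constant $|x_1-(1-\gamma)\phi_\gamma(x_1)|$ via a three-point trick: it chooses $x_0<x_1<x_p$ with $T(x_0)=T(x_p)=x_1$, so that on each cylinder of depth $n$ the affine contributions at the two outer preimages have opposite signs and hence one of the two differences must dominate the common residual $\gamma^n|x_1-(1-\gamma)\phi_\gamma(x_1)|$. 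Both routes then count $\sim\lambda^n$ disjoint cylinders, each contributing at least $\gamma^n\cdot(\text{const})$ to the variation.

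The genuine gap is at the critical value $\gamma=e^{-h_{\text{top}}(T)}$, where you explicitly concede that the main step---the ``quantitative no-cancellation estimate for the signed slope sums''---is not carried out; the phrases ``order $n$ in the aligned regime or order $\sqrt n$ in the random-walk regime'' are heuristics, not arguments. There are also secondary weaknesses: the reduction $\var\phi_\gamma\ge\var P_n-O(1)$ needs care because $P_n$ can jump at cylinder endpoints, and the slope of $P_n$ on a cylinder involves the actual products of branch slopes, not merely their signs $\varepsilon_k$, so the passage to a sign random walk is lossy. The paper does not attack the problem via $\var P_n$ at all; it uses the same three-point device throughout, and the role of the structural hypotheses is solely to guarantee the nonvanishing $x_1-(1-\gamma)\phi_\gamma(x_1)\neq 0$. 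In the first alternative one takes $x_1$ to be the minimum of a nontrivial periodic orbit near the fixed boundary and reads off $(1-\gamma)\phi_\gamma(x_1)>x_1$ from the periodic-orbit formula for $\phi_\gamma(x_1)$; in the zig-zag alternative one takes the base point to be $0$ (resp.\ $1$) and uses $T(0)>0$ (resp.\ $T(1)<1$). That is where the hypotheses actually enter---not through any sign control of slope sums---so your proposed route at criticality is genuinely different from the paper's, but as written it is not a proof.
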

To illustrate the results, examples of graphs of the sync function $\phi_\gamma$, obtained with the tent map (left picture in Fig.\ \ref{FIGALLLAMB}) are presented in Fig.\ \ref{EXAMPL}.
\begin{figure}[ht]
\begin{center}
\includegraphics*[height=45mm]{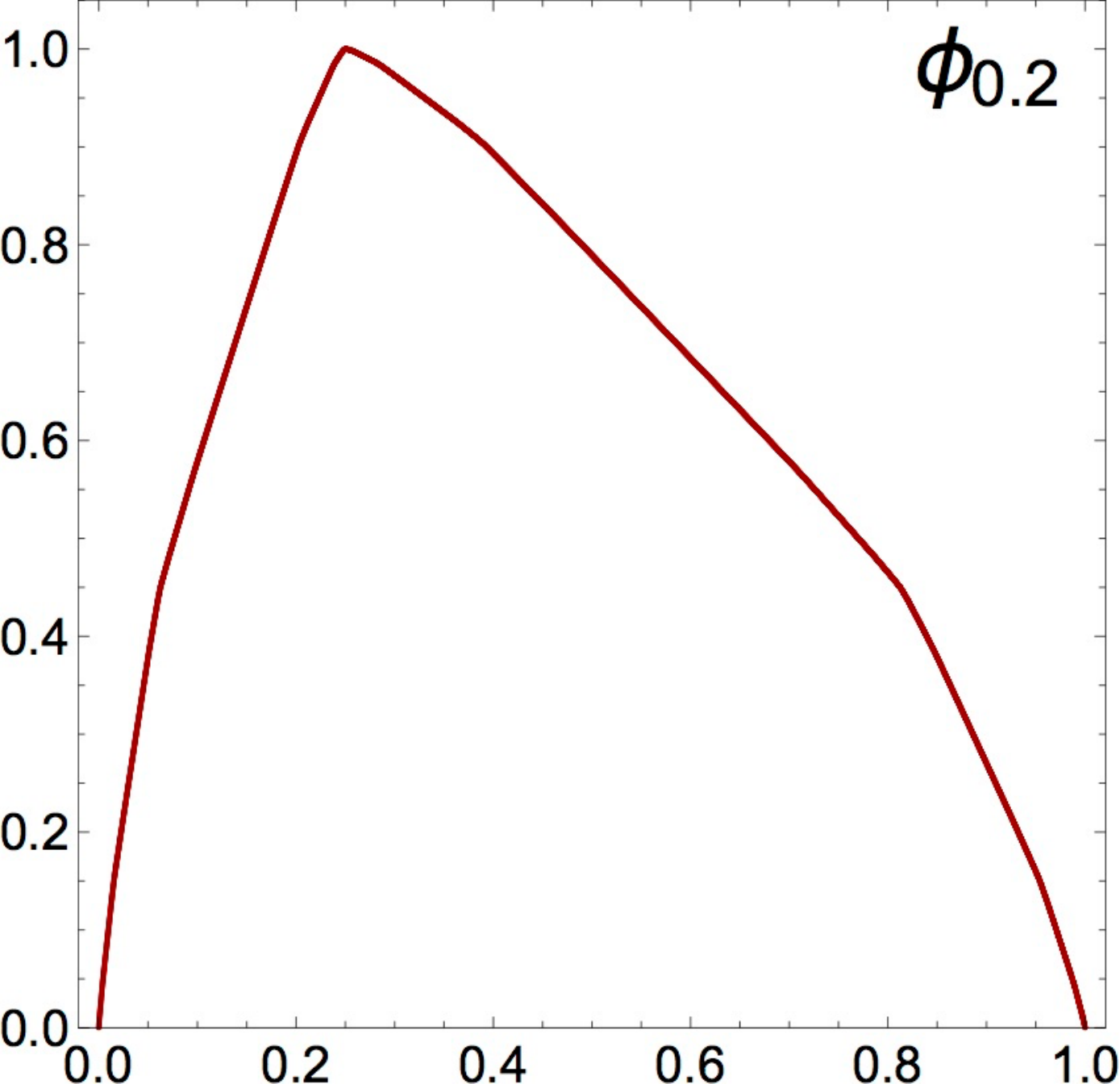}
\hspace{0.75cm}
\includegraphics*[height=45mm]{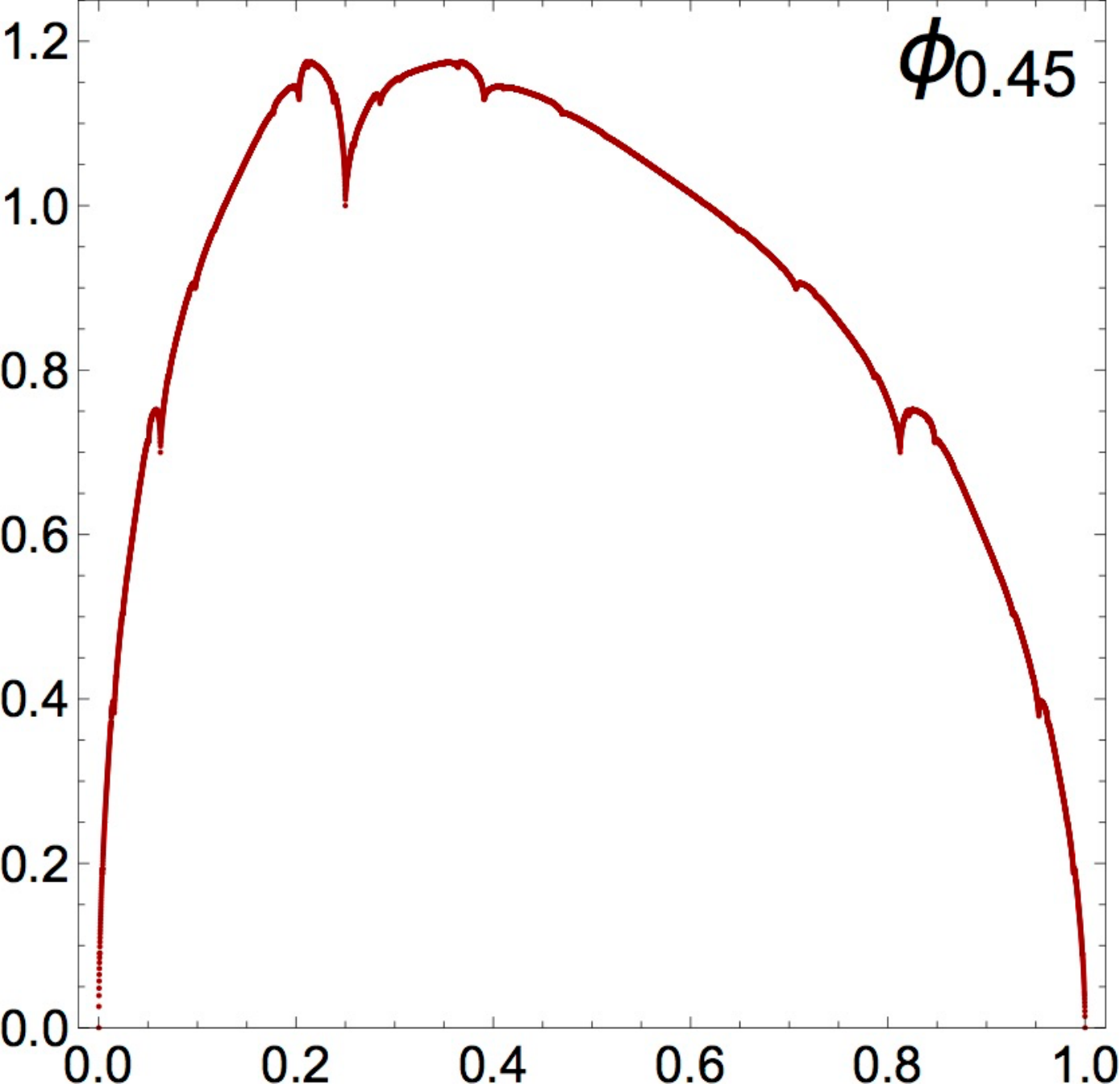}
\end{center}
\begin{center}
\includegraphics*[height=45mm]{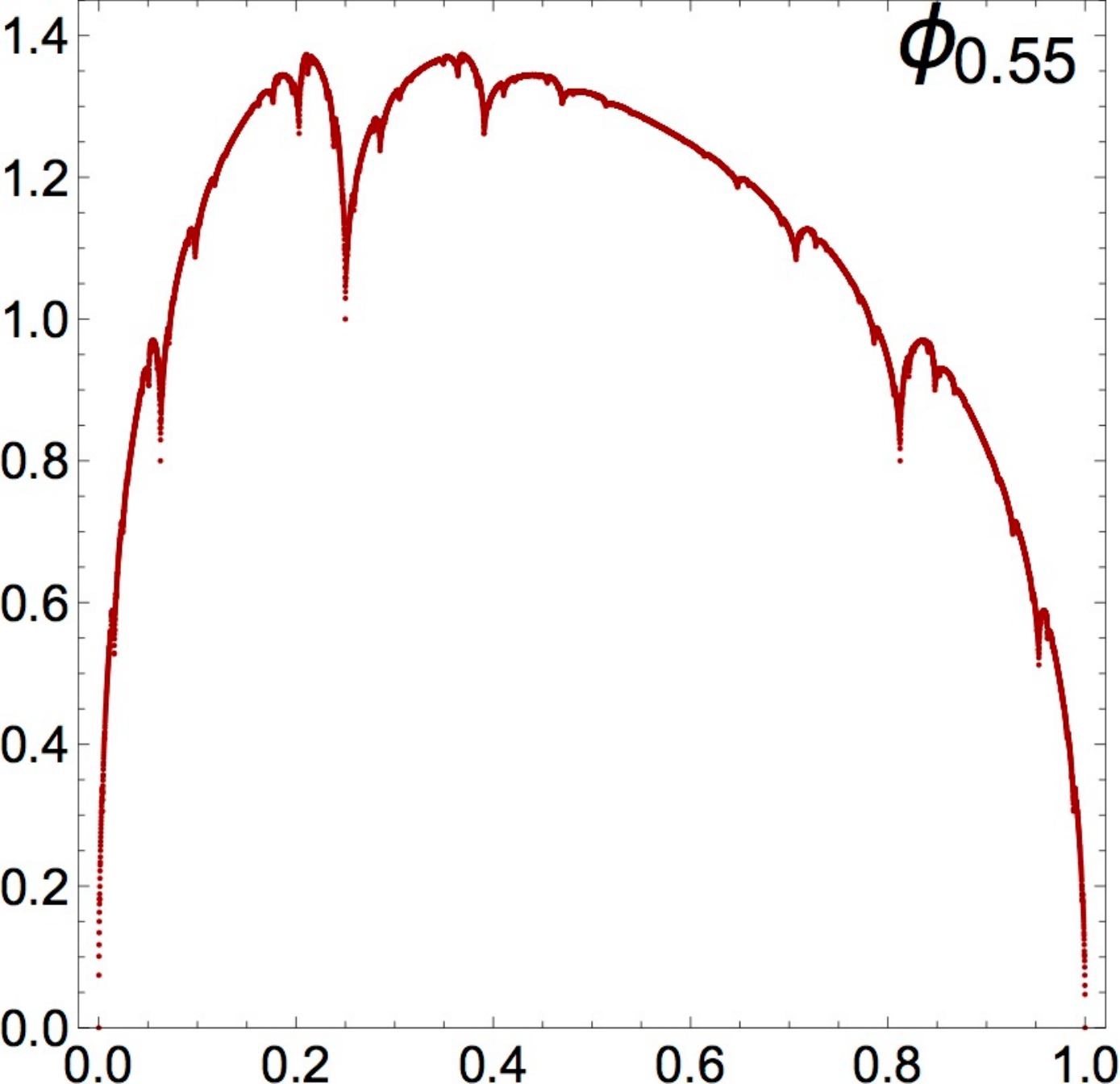}
\hspace{0.75cm}
\includegraphics*[height=45mm]{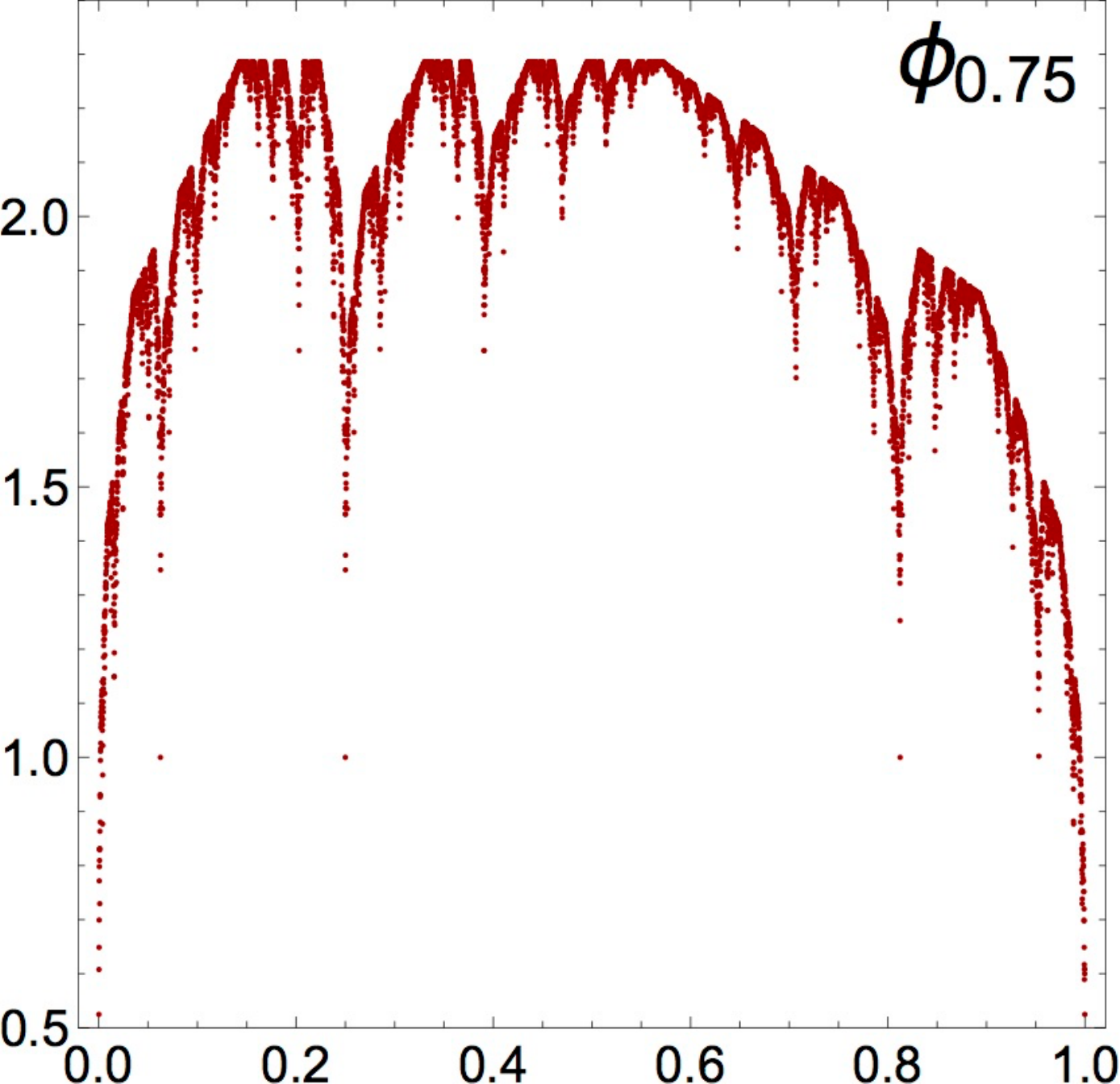}
\end{center}
\caption{Examples of the function $\phi_\gamma$ for the tent map with maximum at $x=a$ (left picture in Fig.\ \ref{FIGALLLAMB}) and $a=\tfrac14$ here. Top left: $\gamma=0.2$ is such that $K\gamma <1$. Top right: $\gamma= 0.45$ (viz.\ $K\gamma<1<e^{h_\text{top}(T)}\gamma$). Bottom left: $\gamma=0.55$ ($e^{h_\text{top}(T)}\gamma<1<e^{\text{Lyap(T)}}\gamma$). Bottom right: $\gamma=0.75$ ($e^{\text{Lyap(T)}}\gamma>1$).}
\label{EXAMPL}
\end{figure}

\section{Proofs}
{\sl Proof of Theorem \ref{MAINRES}.} In order to prove the first assertion, consider an arbitrary subdivision $0=x_1<x_2<\cdots<x_{n}=1$ of $[0,1]$. Uniform convergence in the definition of $\phi_\gamma$ implies 
\[
\sum_{i=1}^{n-1}\left|\phi_\gamma(x_{i+1})-\phi_\gamma(x_{i})\right|\leq \sum_{k=0}^{+\infty}\gamma^k\sum_{i=1}^{n-1}\left|T^{k+1}(x_{i+1})-T^{k+1}(x_{i})\right|.
\]
Using that ${\displaystyle \sum_{i=1}^{n-1}}\left|T^{k+1}(x_{i+1})-T^{k+1}(x_{i})\right|\leq \text{Var}_{[0,1]}T^{k+1}$ and taking the supremum over all subdivisions then yields
\[
\text{Var}_{[0,1]}\phi_\gamma\leq \sum_{k=0}^{+\infty}\gamma^k\text{Var}_{[0,1]}T^{k+1}.
\]
Now, given $k\geq 0$, let $n_k$ be the number of affine branches of $T^k$. Since each branch has variation at most 1 and the jump between branches is at most 1, we have $\text{Var}_{[0,1]}T^{k}\leq 2n_k$ and the first assertion follows from the fact that for a Markov map, we have $h_\text{top}(T)= {\displaystyle \lim_{k\to +\infty}\frac{\log n_k}{k}}$ \cite{PY98}. 

For the second assertion, let $\{I_i\}_{i=1}^N$ be the intervals on which $T$ is affine and let $s_i$ be the slope on $I_i$. Let $X$ be the subshift of finite type consisting of admissible sequences of elements of $\mathcal A=\{1,2,\dots,N\}$ and let $M$ be its transition matrix. 

Given a real function $f$ defined on an interval $I$, define the \emph{reduced variation} of $f$ on $I$ by 
$$
\RV_I(f)=\inf_a\var_I(f(x)-ax).
$$
The reduced variation satisfies the following properties:
\begin{itemize}
\item $\RV_I(f)=\RV_I(f+g)$ for any affine function $g$ on $I$;
\item $\RV_I(\alpha f)=\alpha \RV_I(f)$ for every $\alpha\in\R^+$;
\item $\RV_{[a,c]}(f)\ge \RV_{[a,b]}(f)+\RV_{[b,c]}(f)$, for every $a<b<c$;
\item $\RV_I(f)=0$ if and only if $f$ is affine on $I$.
\end{itemize}
\begin{Lem}\label{lem:affine}
Assume that $T$ is a piecewise affine and transitive Markov map and let $\gamma> e^{-h_\text{top}(T)}$. If $\text{\rm Var}_{[0,1]}\phi_\gamma<+\infty$, 
then $\phi_\gamma$ must be affine on each $I_i$. 
\end{Lem}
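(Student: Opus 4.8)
The plan is to exploit a self-similarity (renewal) equation satisfied by $\phi_\gamma$ under the dynamics of $T$, and to run a renormalization argument on the reduced variation. Since $\phi_\gamma=\sum_{k\ge 0}\gamma^k T^{k+1}=T+\gamma\sum_{k\ge 0}\gamma^k T^{k+1}\circ T=T+\gamma\,\phi_\gamma\circ T$, on each branch $I_i$ the map $T$ is affine with slope $s_i$, so $\phi_\gamma\circ T$ restricted to $I_i$ is $\phi_\gamma$ rescaled by $|s_i|$ on the image interval $T(I_i)$ (a union of Markov atoms, by the Markov property). The first step is to combine this with the additivity and scaling properties of $\RV$ listed above: writing $v=\sum_{i=1}^N \RV_{I_i}(\phi_\gamma)$ for the total reduced variation across branches, the relation $\phi_\gamma=T+\gamma\,\phi_\gamma\circ T$ together with the fact that $T$ is affine on each $I_i$ (so adds nothing to $\RV_{I_i}$) gives, for each $i$, $\RV_{I_i}(\phi_\gamma)=\gamma|s_i|\,\RV_{T(I_i)}(\phi_\gamma)$. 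Because $T(I_i)$ is a union of atoms $I_j$, the superadditivity of $\RV$ yields $\RV_{T(I_i)}(\phi_\gamma)\ge \sum_{j:\,M_{ij}=1}\RV_{I_j}(\phi_\gamma)$.

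The second step is to package these inequalities as a single matrix inequality. Let $r\in\R^N_{\ge 0}$ be the vector with entries $r_i=\RV_{I_i}(\phi_\gamma)$, and let $A$ be the nonnegative matrix with $A_{ij}=\gamma|s_i|\,M_{ij}$. The inequalities above read $r\ge A r$ (componentwise). The third step is a Perron--Frobenius / transitivity argument: transitivity of the Markov map means $M$ is irreducible, hence so is $A$ (the $|s_i|$ are positive). Now I invoke the key arithmetic identity for piecewise affine expanding Markov maps, namely that the topological entropy equals $\log\rho$, where $\rho$ is the spectral radius of the matrix $(|s_i|^{-1}M_{ij})$... — more precisely, since $T$ is expanding and Markov, the number of branches $n_k$ of $T^k$ grows like the spectral radius of $M$, and one has the variational/transfer identity linking $\gamma|s_i|M_{ij}$ to $e^{h_\mathrm{top}(T)}\gamma$. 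The upshot I want is: $\gamma>e^{-h_\mathrm{top}(T)}$ forces the spectral radius of $A$ to be strictly greater than $1$. (This is where the expanding Markov structure and the identity $h_\mathrm{top}(T)=\lim_k \tfrac1k\log n_k$ are used; the cleanest route is to note that $\|A^k\mathbf 1\|$ grows at rate $\gamma^k\cdot(\text{growth rate of }\sum|s_{i_0}\cdots s_{i_{k-1}}|$ over admissible words$)$, and that the latter growth rate is exactly $e^{k\,h_\mathrm{top}(T)}$ for an expanding Markov map, since branch lengths of $T^k$ are reciprocals of products of slopes and sum to at most $N$ while there are $n_k\sim e^{k h_\mathrm{top}(T)}$ of them — forcing the products $|s_{i_0}\cdots s_{i_{k-1}}|$ to be comparable to $n_k$ in aggregate).

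For an irreducible nonnegative matrix $A$ with Perron eigenvalue $>1$, the inequality $r\ge Ar$ with $r\ge 0$ forces $r=0$: pairing with the strictly positive left Perron eigenvector $\ell$ gives $\ell\cdot r\ge \ell\cdot Ar=\rho(A)\,\ell\cdot r$, and $\rho(A)>1$ with $\ell\cdot r\ge 0$ forces $\ell\cdot r=0$, hence $r=0$ since $\ell>0$. Therefore $\RV_{I_i}(\phi_\gamma)=0$ for every $i$, which by the last listed property of $\RV$ means $\phi_\gamma$ is affine on each $I_i$, as claimed. The main obstacle I anticipate is the third step: carefully establishing that $\gamma>e^{-h_\mathrm{top}(T)}$ makes $\rho(A)>1$, since $A$ weights admissible transitions by $\gamma|s_i|$ rather than by $1$, and one must connect the weighted growth rate $\lim_k\tfrac1k\log\big(\gamma^k\sum_{\text{admissible }i_0\cdots i_{k-1}}|s_{i_0}\cdots s_{i_{k-1}}|\big)$ to $h_\mathrm{top}(T)+\log\gamma$; the finiteness hypothesis $\mathrm{Var}_{[0,1]}\phi_\gamma<+\infty$ enters precisely to guarantee that each $r_i$ is finite, so that the pairing $\ell\cdot r$ is a legitimate finite quantity and the contradiction is genuine.
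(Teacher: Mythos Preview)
Your overall strategy---reduce to a vector inequality on the reduced variations $r_i=\RV_{I_i}(\phi_\gamma)$ via the renewal relation $\phi_\gamma=T+\gamma\,\phi_\gamma\circ T$, then kill $r$ with Perron--Frobenius---is exactly the paper's. However, your first step contains a concrete error that propagates into the artificial ``main obstacle'' you flag in step three.

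The mistake is the factor $|s_i|$ in the claimed identity $\RV_{I_i}(\phi_\gamma)=\gamma|s_i|\,\RV_{T(I_i)}(\phi_\gamma)$. Total variation (and hence $\RV$) is invariant under monotone reparametrisation of the \emph{domain}: if $T|_{I_i}(x)=s_ix+b_i$ and $u=T(x)$, then for any $a$,
\[
\var_{I_i}\bigl(\phi_\gamma(T(x))-ax\bigr)=\var_{T(I_i)}\bigl(\phi_\gamma(u)-(a/s_i)\,u\bigr),
\]
so taking the infimum over $a$ gives $\RV_{I_i}(\phi_\gamma\circ T)=\RV_{T(I_i)}(\phi_\gamma)$, with no slope factor. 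You may be confusing variation with a derivative norm, where the chain rule would indeed contribute $|s_i|$.

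With this correction the matrix is simply $\gamma M$, not your $A$ with entries $\gamma|s_i|M_{ij}$. The inequality becomes $r\ge \gamma M r$, and since $h_{\mathrm{top}}(T)=\log\rho(M)$ for a Markov shift, the hypothesis $\gamma>e^{-h_{\mathrm{top}}(T)}$ says precisely that $\rho(\gamma M)>1$. Your Perron--Frobenius step then finishes the proof immediately; the weighted growth-rate analysis you worry about in step three is unnecessary and rests on the erroneous $|s_i|$. (Your pairing argument with the left Perron eigenvector is fine, and your observation that $\var_{[0,1]}\phi_\gamma<\infty$ is needed to make each $r_i$ finite is correct and worth stating explicitly.)
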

\begin{proof}
Let $v_i=\RV_{I_i}(\phi_\gamma)$. The relation $\phi_\gamma=T+\gamma\phi_\gamma\circ T$, the above properties, the Markov property and the fact that the branches are linear imply 
\[
v_i=\RV_{I_i}(T+\gamma\phi_\gamma\circ T)=\gamma\RV_{I_i}(\phi_\gamma\circ T)\geq \gamma\sum_{i\to j}\RV_{I_{i}\cap T^{-1}(I_j)}(\phi_\gamma\circ T)=\gamma \sum_{i\to j}v_j.
\]
That is, $v\ge \gamma Mv$ and $v=\{v_i\}_{i=1}^N$ is a non-negative vector. Since order is preserved, we
can iterate this inequality giving $v\ge (\gamma M)^nv$ for all $n\in\N$. The assumption $\gamma> e^{-h_\text{top}(T)}$ implies that $\gamma A$
is a non-negative irreducible matrix with spectral radius greater than 1. According to the Perron-Frobenius Theorem, the only 
non-negative $v$ satisfying $v\ge \gamma Mv$ is $v=0$, viz.\ $\phi_\gamma$ must be affine on each $I_i$.
\end{proof}

It remains to show that there are at most finitely many values of $\gamma$ such that $\phi_\gamma$ is affine on each interval $I_i$. To that goal, we separate the proof into two cases:
\begin{itemize}
\item[(1)] there exist $j\in \{1,\dots,N\}$, $x,y\in I_j$ and $n\in\N$ such that $T'(T^n(x))\neq T'(T^n(y))$,
\item[(2)] there exists $n\in\N$ such that $T^n$ contains a discontinuity inside some $I_i$.
\end{itemize}
One of these conditions must always hold. Indeed, if all iterates $T^n$ have constant derivatives on each $I_i$, then some iterate $T^n$ must be discontinuous inside some $I_i$ (otherwise, $T$ being piecewise expanding, the length $|T^n(I_i)|$ of a sufficiently large iterate would be larger than 1, which is impossible).

\noindent
{\em Case (1).} We may wlog assume that $n$ is the smallest integer such that $T'(T^n(x))\neq T'(T^n(y))$. Since $T$ is piecewise affine, there must exist $z\in (x,y)\cap T^{-n}(a)$ where $a$ is an atom boundary point for which the left and right derivative $T'(a-0)\neq T'(a+0)$ differ. The Markov assumption implies that the orbits of boundary points must be eventually periodic. Moreover, we may also assume that $T$ is continuous at all iterates of $a$; otherwise case (2) applies. Therefore, there exist $p,q\geq 0$ such that $p+q\leq N+1$ and an atom boundary point $b$ so that 
\[
T^{q}(a)=b\quad\text{and}\quad T^{p}(b)=b.
\]
Applying repeatedly the relation $\phi_\gamma=T+\gamma\phi_\gamma\circ T$ to successive iterates, one gets
\[
\phi_\gamma(z)=\sum_{k=0}^{n-2}\gamma^kT^{k+1}(z)+\gamma^{n-1}\left(\sum_{k=0}^{q-1}\gamma^kT^{k}(a)+\frac{\gamma^{q}}{1-\gamma^{p}}\sum_{k=0}^{p-1}\gamma^kT^{k}(b)\right).
\]
All iterates $T^k$ involved in the RHS are piecewise affine; hence the left and right derivatives $(T^k)'(\cdot-0$ and $(T^k)'(\cdot+0)$ are well-defined at every point in the interior of $(0,1)$. These derivatives can be extended by continuity to $[0,1]$ (viz.\ $T'(0-0):=T'(0+0)$ and $T'(1+0):=T'(1-0)$), so that $\phi_\gamma$ certainly has well-defined left and right derivatives at $z$.

That $n$ is the smallest integer implies $(T^k)'(z-0)=(T^k)'(z+0)$ for $k=1,\dots n-1$. It follows that 
the equation $(\phi_\gamma)'(z-0)=(\phi_\gamma)'(z+0)$ amounts to a polynomial one, with degree at most $p+q-1\leq N$, and which is non-degenerate since $T'(a-0)\neq T'(a+0)$. Therefore, except for at most $N$ values of $\gamma$, we have $(\phi_\gamma)'(z-0)\neq (\phi_\gamma)'(z+0)$ and then $\phi_\gamma$ cannot be affine on $I_j$, as desired. 

\noindent
{\em Case (2).} The argument is similar to the previous case. Let $x$ in the interior of some $I_i$ and a discontinuity $a$ of $T$ be such that $T^n(x)=a$ for some $n\in\N$. Wlog, we can assume that $T$ is continuous at each $T^k(x)$ for $k=1,\dots,n-1$. Then we have 
\[
\left|\phi_\gamma(x-0)-\phi_\gamma(x+0)\right|=\gamma^n\left|\phi_\gamma(a-0)-\phi_\gamma(a+0)\right|.
\]
Let $a_-=T(a-0)$ and $a_+=T(a+0)$. Again, the orbits of $a_-,a_+$ must be eventually periodic. More precisely, there exist $q_-,p_-\geq 0$, $q_-+p_-\leq 2N$, $\{a_{-,k}\}_{k=1}^{q_--1}$ and $\{b_{-,k}\}_{k=0}^{p_--1}$ such that
\[
\phi_\gamma(a-0)=a_-+\sum_{k=1}^{q_--1}a_{-,k}\gamma^k+\frac{\gamma^{q_-}}{1-\gamma^{p_-}}\sum_{k=0}^{p_--1}b_{-,k}\gamma^k
\]
and similarly
\[
\phi_\gamma(a+0)=a_++\sum_{k=1}^{q_+-1}a_{+,k}\gamma^k+\frac{\gamma^{q_+}}{1-\gamma^{p_+}}\sum_{k=0}^{p_+-1}b_{+,k}\gamma^k.
\]
Similarly to before, the equation $\phi_\gamma(a-0)= \phi_\gamma(a+0)$ amounts to a non-degenerate polynomial one, since $a_-\neq a_+$. The polynomial degree is at most $\max\{q_-,q_+\}+p_-+p_+-1\leq 2N-1$ if $p_-\neq p_+$ (resp.\ $\max\{q_-,q_+\}+p+-1\leq 2N-1$ if $p_-=p_+=p$). Hence, except for at most $2N-1$ values of $\gamma$, $\phi_\gamma$ cannot be affine on $I_i$. The proof of Theorem \ref{MAINRES} is complete. \hfill $\Box$
\bigskip

\noindent
{\sl Proof of Proposition \ref{ALLLAMB}.} Assume first that $T(0+0)=0$. By assumption, there exists $a>0$ such that $T$ is continuous and expanding on $(0,a)$, $T:(0,a)\mapsto (0,T(a))$, and $T(x)>x$ for $x\in (0,a)$. Therefore, any $x\in (0,T(a))$ has a pre-image $y\in T^{-1}(x)$ such that $y<x$. 

By transitivity, the periodic orbits of $T$ are dense in $[0,1]$. Given $p>1$, let $(x_i)_{i=1}^p$ be one such orbit for which $x_1=\min_i x_i\in (0,T(a))$. Labelling in $(x_i)_{i=1}^p$ has been chosen so that $x_{i+1}=T(x_i)$ for $i=1,\cdots ,p-1$ and $T(x_p)=x_1$.
Let $x_0<x_1$ be the pre-image of $x_1$ in $(0,a)$.

Let $n\in\N$ and consider the pre-images $x_{0,i,n}\in T^{-n}(x_0),x_{1,i,n}\in T^{-n}(x_1)$ and $x_{p,i,n}\in T^{-n}(x_p)$ by the same affine branch of $T^n$. Transitivity and the fact that $T$ has a full branch ensure their existence for every branch of $T^n$, provided that $n$ is large enough.

For $k=1,\cdots ,n-1$, the iterates $T^{k+1}(x_{0,i,n}),T^{k+1}(x_{1,i,n}),T^{k+1}(x_{p,i,n})$ lie in the same atoms. Hence, there exist $c_{i,n}\in \R$ such that we have
\begin{align*}
\phi_\gamma(x_{0,i,n})-\phi_\gamma(x_{1,i,n})&=\sum_{k=0}^{n-1}\gamma^k\left(T^{k+1}(x_{0,i,n})-T^{k+1}(x_{1,i,n})\right) +\gamma^n\sum_{k=0}^{+\infty}\gamma^k\left(T^{k+1}(x_{0})-T^{k+1}(x_{1})\right) \\
&=c_{i,n}(x_{0,i,n} - x_{1,i,n})+\gamma^{n}(\phi_\gamma(x_{0})-\phi_\gamma(x_{1})),
\end{align*}
and $T(x_0)=x_1$ implies $\phi_\gamma(x_{0})-\phi_\gamma(x_{1})=x_1-(1-\gamma)\phi_\gamma(x_1)$. Similarly, we have
\[ 
\phi_\gamma(x_{p,i,n})-\phi_\gamma(x_{1,i,n})=c_{i,n}(x_{p,i,n} - x_{1,i,n})+\gamma^{n}(x_1-(1-\gamma)\phi_\gamma(x_1)).
\]
Since $x_0<x_1<x_p$, the point $x_{1,i,n}$ must lie inside the interval delimited by $x_{0,i,n}$ and $x_{p,i,n}$. Hence, the first terms in these expressions of $\phi_\gamma(x_{0,i,n})-\phi_\gamma(x_{1,i,n})$ and $\phi_\gamma(x_{p,i,n})-\phi_\gamma(x_{1,i,n})$ must be of opposite sign. This implies that
\[
\max\left\{|\phi_\gamma(x_{0,i,n})-\phi_\gamma(x_{1,i,n})|,|\phi_\gamma(x_{p,i,n})-\phi_\gamma(x_{1,i,n})|\right\}\geq \gamma^n|x_1-(1-\gamma)\phi_\gamma(x_1)|.
\]
Now, there are $N_n\simeq e^{nh_\text{top}(T)}$ triplets $x_{0,i,n},x_{1,i,n},x_{p,i,n}$, one for each cylinder of length $n+1$, i.e.\ each element of $\bigcap_{k=0}^{n}T^{-k}([0,1))$. Therefore, we have 
\[
\text{Var}\phi_\gamma\big{|}_{[0,1]}\geq N_n\gamma^n|x_1-(1-\gamma)\phi_\gamma(x_1)|,
\]
and to ensure the claim, it suffices to show that $x_1\neq (1-\gamma)\phi_\gamma(x_1)$.

Using that $(x_i)_{i=1}^p$ is $p$-periodic, one gets
\[
\phi_\gamma(x_1)=\frac{x_2+\gamma x_3+\cdots +\gamma^{p-1}x_p+\gamma^px_1}{1-\gamma^p}\,
\]
and the assumptions $x_1=\min_i x_i$ and $p>1$ imply
\[
x_2+\gamma x_3+\cdots +\gamma^{p-1}x_p+\gamma^px_1>\sum_{k=0}^p\gamma^k x_1
\]
from where it follows that $(1-\gamma)\phi_\gamma(x_1)>x_1$. This concludes the proof in the case $T(0+0)=0$. The case $T(1-0)=1$ can be treated identically.

Finally, in the case where there exist $\underline{a}<a<\overline{a}$ such that $T(\underline{a})=T(\overline{a})=a$ and either $T(a)=0$ and $T(0)>0$ or $T(a)=1$ and $T(1)<1$, a reasoning similar to the previous proof can be applied. The final condition for infinite variation becomes $0-(1-\gamma)\phi_\gamma(0)\neq 0$ (resp.\ $1-(1-\gamma)\phi_\gamma(1)\neq 0$), which certainly holds because $\phi_\gamma(0)\geq T(0)>0$ (resp. $\phi_\gamma(1)\leq T(1)+\frac{\gamma}{1-\gamma}<1+\frac{\gamma}{1-\gamma}$).\hfill $\Box$

\end{document}